\renewcommand{\dateseparator}{-}
\renewcommand{\today}{\the\year \dateseparator \twodigit\month
\dateseparator \twodigit\day}
\newcounter{restatecount}
\theoremstyle{plain}
\newtheorem{thm}{Theorem}[section]
\newtheorem{thm-ref}{Theorem}
\newtheorem{lem}[thm]{Lemma}
\newtheorem{cor}[thm]{Corollary}
\theoremstyle{definition}
\newtheorem{defn}[thm]{Definition}
\theoremstyle{remark}
\newtheorem*{remark}{Remark}
\numberwithin{equation}{section}
\newcommand{\M}{{\mathcal M}}
\newcommand{\T}{{\mathcal T}}
\newcommand{\C}{{\mathbb C}}
\newcommand{\R}{{\mathbb R}}
\title{Isometric disks are holomorphic}
\author{Stergios M. Antonakoudis} 
\thanks{Department of Pure Mathematics and Mathematical Statistics, University of Cambridge, UK~\\Stergios M. Antonakoudis, stergios@dpmms.cam.ac.uk}
\begin{document}

  \maketitle

\begin{abstract}
  This paper shows that every totally-geodesic isometry from the unit disk to a finite-dimensional
  Teichm\"uller space for the intrinsic Kobayashi metric is either holomorphic or anti-holomorphic; in
  particular, it is a Teichm\"uller disk. Additionally, a similar result is proved for a large class of
  \textit{disk-rigid} domains, which includes strictly convex bounded domains, as well as Teichm\"uller
  spaces.
\end{abstract}




  \tableofcontents


\section{Introduction}\label{sec:intro} 
Let $\mathbb{CH}^1$ denote the unit disk $\Delta=\{~z\in \C : |z| < 1~\}$ equipped with its
Poincar\'e metric $|dz|/(1-|z|^2)$ and let $\T_{g,n}$ denote a finite-dimensional Teichm\"uller
space equipped with its \textit{intrinsic Kobayashi metric}, which is the largest (Finsler) metric
such that every holomorphic map $f: \mathbb{CH}^1 \rightarrow \T_{g,n}$ is \textit{non-expanding}:
$||df||\leq 1$.

An important feature of the Kobayashi metric of $\T_{g,n}$ is that every holomorphic map
$f: \mathbb{CH}^1 \rightarrow \T_{g,n}$, for which $df$ is an isometry on tangent spaces, is
\textit{totally geodesic}: it sends real geodesics to real geodesics preserving their
length. Moreover, there are such isometries through every point in every direction, known as
Teichm\"uller disks.~\cite{Kra:survey}

\subsection*{Holomorphic rigidity for Teichm\"uller spaces} Our main result in this paper is the following:

\begin{thm}\label{thm:disks:intro}
  Let $\T_{g,n}$ be a finite-dimensional Teichm\"uller space equipped with its intrinsic Kobayashi
  metric. Every totally geodesic isometry $f: \mathbb{CH}^1 \hookrightarrow \T_{g,n}$ is either
  holomorphic or anti-holomorphic; in particular, it is a Teichm\"uller disk.
 \end{thm}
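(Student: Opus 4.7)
My plan is to reduce the theorem to an infinitesimal rigidity statement about the Kobayashi unit ball of $\T_{g,n}$. Let $f\colon \Delta \hookrightarrow \T_{g,n}$ be a totally geodesic isometry. It suffices to prove that at every $z \in \Delta$ the derivative $df_z \colon T_z\Delta \to T_{f(z)}\T_{g,n}$ is either $\C$-linear or $\C$-antilinear: granting this, the two subsets of $\Delta$ on which the alternatives hold are both closed, disjoint (any isometry has nonzero derivative), and cover $\Delta$, so by connectedness one of them is all of $\Delta$. Hence $f$ is holomorphic or anti-holomorphic, and the isometry hypothesis then makes it a Teichm\"uller disk by definition.

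The infinitesimal statement reads: at any $q \in \T_{g,n}$, if a real $2$-plane $P \subset T_q\T_{g,n}$ meets the closed Kobayashi unit ball $\mathcal{B}$ in a round Euclidean unit disk, then $P$ is a complex line. By Royden's theorem the Kobayashi norm on $T_q\T_{g,n}$ equals the Teichm\"uller norm, which is dual to the $L^1$-norm on the space of holomorphic quadratic differentials $Q(X_q)$. Every complex line $L \subset T_q\T_{g,n}$ satisfies the hypothesis for free: the Kobayashi norm is $\C$-homogeneous, hence restricts on $L$ to a constant multiple of the Euclidean norm, and a Teichm\"uller disk through $q$ in the direction $L$ realizes the identification $(L, \|\cdot\|_K|_L) \cong (\C, |\cdot|)$. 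The main task is to rule out non-complex $2$-planes.

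This is the principal obstacle, and it is sensitive to the non-strict-convexity of $\mathcal{B}$. In a polydisc $\Delta^n$ ($n \geq 2$) the analogous statement fails: the anti-diagonal $\{(z,\bar z) \colon z \in \C\} \subset T_0\Delta^2$ is a real $2$-plane on which the Kobayashi norm restricts to the Euclidean one, and correspondingly the non-holomorphic map $z \mapsto (z, \bar z)$ is a totally geodesic isometric embedding of $\Delta$ into $\Delta^2$. My strategy for $\T_{g,n}$ is to exploit the Strebel-differential structure on $Q(X_q)$: extreme rays of the $L^1$-unit ball are normalised Strebel differentials, and Teichm\"uller's uniqueness theorem provides a rigidity absent in the polydisc setting. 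Supposing a non-complex $P$ with Euclidean restricted norm, I would analyse the circle family of supporting Strebel differentials along $\partial(\mathcal{B} \cap P)$ and extract a contradiction with the uniqueness of extremal Beltrami representatives, thereby forcing $P$ to be a complex line.
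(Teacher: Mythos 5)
There is a genuine gap: the entire weight of the theorem is placed on your ``infinitesimal statement'' (a real $2$-plane $P\subset T_q\T_{g,n}$ on which the Kobayashi norm is a round Hermitian norm must be a complex line), and you do not prove it --- you only sketch a hope that Strebel differentials and Teichm\"uller uniqueness will ``extract a contradiction.'' This is not a known result, and it is far from clear that it is even true: strict convexity and $C^1$-smoothness of the Teichm\"uller norm (Royden) do not exclude non-complex round planes, as the example of $\C^2$ with an $\ell^p$-norm, $2<p<\infty$, shows --- there the anti-diagonal $\{(v,\bar v)\}$ is round although the ball is strictly convex and smooth. Moreover, near the boundary of moduli space the Teichm\"uller norm degenerates towards a sup-type norm, so any proof of your claim would have to use fine global information about $Q(X)$, not just convexity. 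Your supporting remarks are also inaccurate: since the $L^1$-ball of $Q(X)$ is strictly convex, \emph{every} unit-norm differential spans an extreme ray (not only Strebel differentials), and Teichm\"uller's uniqueness theorem concerns extremal quasiconformal maps, not the linear geometry of a single tangent plane. Note also that your target statement is strictly stronger than the theorem: a totally geodesic isometric disk carries global data (a whole family of Teichm\"uller geodesics), whereas a single round plane need not exponentiate to any geodesic disk; the paper's proof uses exactly this global data, complexifying $f$ along the foliation of $\mathbb{CH}^1\times\overline{\mathbb{CH}^1}$ by graphs of hyperbolic reflections to a holomorphic map $F$ with $F(z,z)=f(z)$, and then invoking Sullivan--Tanigawa rigidity of essentially proper holomorphic disks to force $F(z,w)=f(z)$ or $F(z,w)=f(w)$. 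No infinitesimal classification of round planes is needed there.

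There are secondary gaps in your reduction as well. A totally geodesic isometry is a priori only locally Lipschitz for the Kobayashi metric, hence differentiable only almost everywhere (Rademacher), so ``at every $z\in\Delta$ the derivative $df_z$'' is unjustified; and your connectedness argument needs the sets where $df$ is $\C$-linear, respectively $\C$-antilinear, to be closed, which presupposes continuity of $df$ that you have not established. Even granting $\C$-linearity of $df$ almost everywhere, passing to holomorphy of $f$ requires an elliptic regularity step (the paper uses Weyl's lemma for this in the proof of its Theorem~\ref{thm:disk-convex}). These points are repairable, but the central infinitesimal rigidity claim is not, as it stands, a proof.
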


 \begin{remark}
   Theorem~\ref{thm:disks:intro} settles a long standing problem in Teichm\"uller theory~\footnote{See
     problem 5.3 in \cite{Fletcher:Markovic:survey}.}.
 \end{remark}

 The proof is geometric and rests on the idea of \textit{complexification}; see
 \S~\ref{sec:disks}. Informally, the theorem shows that the intrinsic Kobayashi metric of $\T_{g,n}$
 determines its natural structure as a complex manifold.

 As a corollary, we obtain the following general result about Teichm\"uller spaces.
\begin{cor}\label{cor:intro}
  Let $\T_{g,n}$, $\T_{h,m}$ be two finite-dimensional Teichm\"uller spaces equipped with their intrinsic
  Kobayashi metric. Every totally geodesic isometry $f: \T_{g,n} \hookrightarrow \T_{h,m}$ is either
  holomorphic or anti-holomorphic.
\end{cor}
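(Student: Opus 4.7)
The plan is to reduce Corollary~\ref{cor:intro} to Theorem~\ref{thm:disks:intro} by pre-composing $f$ with Teichm\"uller disks in the source, and then to upgrade the resulting pointwise dichotomy to a global one via two connectedness arguments.

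For every Teichm\"uller disk $\phi:\mathbb{CH}^1 \hookrightarrow \T_{g,n}$, the composition $f\circ\phi:\mathbb{CH}^1 \to \T_{h,m}$ is itself a totally geodesic isometry (composition preserves both properties), so Theorem~\ref{thm:disks:intro} applies and $f\circ\phi$ is either holomorphic or anti-holomorphic. Fix $p\in\T_{g,n}$, and for each nonzero $v\in T_p\T_{g,n}$ let $\phi_v$ be a Teichm\"uller disk with $\phi_v(0)=p$ and $d\phi_v(\partial_x)=v$; such a disk exists and depends continuously on $v$. Applying the previous observation to each $\phi_v$ produces a sign
\[ \varepsilon(v)\in\{+1,-1\} \quad \text{with} \quad df_p(Jv)=\varepsilon(v)\,J\,df_p(v), \]
where $J$ denotes the complex structure. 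Since $df_p$ is an $\R$-linear isometry, $df_p(v)\neq 0$ whenever $v\neq 0$, and continuity of $v\mapsto \phi_v$ forces $\varepsilon$ to be locally constant on $T_p\T_{g,n}\setminus\{0\}$. The latter is connected (Teichm\"uller space has real dimension $\geq 2$), so $\varepsilon(v)=\varepsilon(p)$ depends only on $p$. The same local-constancy argument, combined with connectedness of $\T_{g,n}$, then makes $\varepsilon$ a global constant: either $df$ commutes with $J$ everywhere and $f$ is holomorphic, or it anti-commutes with $J$ everywhere and $f$ is anti-holomorphic.

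The main obstacle I anticipate is the regularity of $f$: a priori it is only a Finsler isometry, so speaking of the differential $df_p$ at every point requires justification. I expect this to be handled by the abundance of Teichm\"uller disks --- along any one of them $f$ becomes real analytic by Theorem~\ref{thm:disks:intro}, and since such disks cover every direction through every point, one can glue this two-dimensional regularity into a well-defined (indeed smooth) differential of $f$ on all of $\T_{g,n}$. Once this regularity step is in place, the outline above closes the proof.
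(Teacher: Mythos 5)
Your reduction --- compose $f$ with Teichm\"uller disks through every point and direction, apply Theorem~\ref{thm:disks:intro} to each composition, and then argue the sign $\varepsilon$ is constant --- is the same basic strategy as the paper's (its Theorem~\ref{thm:disk-convex}), but the step you defer to the end is precisely where the real work lies, and your proposed fix for it does not close the gap. You write that since $f$ restricted to every Teichm\"uller disk is (anti-)holomorphic, ``one can glue this two-dimensional regularity into a well-defined (indeed smooth) differential of $f$.'' That inference is not automatic: regularity of a map along each member of a family of two-dimensional slices does not by itself yield joint differentiability, let alone smoothness, of the map. The paper has to confront exactly this issue even in the disk case, where it invokes a genuine separate-analyticity theorem of Bernstein (Theorem~\ref{thm:bernstein}, via Lemma~\ref{lem-real-analytic}); for the corollary it takes a different route entirely: the Kobayashi metric is locally bi-Lipschitz to a Hermitian metric, so the isometry $f$ is locally Lipschitz and hence differentiable \emph{almost everywhere} by Rademacher's theorem, and at such a point $p$ the fact that complex geodesics map to complex geodesics forces $df_p$ to be $\C$-linear or $\C$-antilinear. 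Without some argument of this kind, your identity $df_p(Jv)=\varepsilon(v)\,J\,df_p(v)$ has no meaning, since it presupposes the chain rule $d(f\circ\phi_v)_0 = df_p\circ (d\phi_v)_0$ and hence the existence of a total differential at $p$.

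Two further consequences of the missing regularity affect your globalization and conclusion. First, your claim that $\varepsilon(p)$ is locally constant in $p$ uses continuity of $p\mapsto df_p$, which you do not have; the paper instead uses completeness of the metric to join two points of differentiability by a complex geodesic, and the (anti-)holomorphy of $f$ along that single disk transfers the sign from one point to the other, with no continuity of $df$ needed. Second, even after knowing $df_p$ is complex linear (say) at almost every $p$, one still must conclude that $f$ itself is holomorphic; the paper does this by observing that $f$ then satisfies the Cauchy--Riemann equations in the distributional sense and appealing to Weyl's regularity lemma. Your outline ends before either of these steps, so as written it establishes the dichotomy only under an unproved smoothness hypothesis on $f$.
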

We note that there are, indeed, many holomorphic isometries $f: \T_{g,n} \hookrightarrow \T_{h,m}$
between Teichm\"uller spaces for their Kobayashi metric.~\cite{Kra:survey}

\subsection*{Holomorphic rigidity for convex domains} In addition to Theorem~\ref{thm:disks:intro},
we prove a similar result for a large class of \textit{disk-rigid} domains, which include strictly
convex bounded domains, as well as Teichm\"uller spaces. We discuss the general statement in
\S~\ref{sec:disk-rigid}; as a special case, we obtain:

\begin{thm}\label{thm:disks:other}
  Let $\mathcal{B}_1$, $\mathcal{B}_2$ be two strictly convex bounded domains equipped with their
  intrinsic Kobayashi metric. Every totally geodesic isometry
  $f: \mathcal{B}_1 \hookrightarrow \mathcal{B}_2$ is either holomorphic or anti-holomorphic.
\end{thm}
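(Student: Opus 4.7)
The plan is to bootstrap Theorem~\ref{thm:disks:other} from the disk-to-domain case --- the analogue of Theorem~\ref{thm:disks:intro} for strictly convex bounded domains that is part of the disk-rigid framework of \S\ref{sec:disk-rigid}. Once that input is available, the remaining work consists of an infinitesimal linear-algebraic step at each point followed by a connectedness argument on $\mathcal{B}_1$.

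\emph{Reduction to the disk case.} Fix $p \in \mathcal{B}_1$ and a complex line $L \subset T_p\mathcal{B}_1$. By Lempert's theorem there is a holomorphic complex geodesic $\phi_L : \mathbb{CH}^1 \hookrightarrow \mathcal{B}_1$ with $\phi_L(0)=p$ tangent to $L$. The composition $f \circ \phi_L : \mathbb{CH}^1 \hookrightarrow \mathcal{B}_2$ is again a totally-geodesic isometric embedding of the Poincar\'e disk, so by the disk-rigidity of $\mathcal{B}_2$ it is holomorphic or anti-holomorphic. Hence $df_p$ sends every complex line $L \subset T_p\mathcal{B}_1$ to a complex line in $T_{f(p)}\mathcal{B}_2$, and the restriction $df_p|_L$ is either $\C$-linear or $\C$-antilinear.

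\emph{From pointwise to global.} An elementary linear-algebra lemma says that an $\R$-linear injection $T$ of complex vector spaces whose restriction to every complex line is $\C$-linear or $\C$-antilinear must itself be globally $\C$-linear or globally $\C$-antilinear: if $T|_{L_1}$ were $\C$-linear and $T|_{L_2}$ were $\C$-antilinear on two distinct complex lines, then testing the assumption on the complex line through $v_1+v_2$ for $v_i \in L_i \setminus \{0\}$ quickly forces $T(v_1)=0$ or $T(v_2)=0$, contradicting injectivity. Applied to $df_p$ (which is injective because $f$ is an isometry), this shows $f$ is infinitesimally holomorphic or infinitesimally anti-holomorphic at each $p$. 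Now set $E_\pm = \{\,p \in \mathcal{B}_1 : df_p \circ J_1 = \pm J_2 \circ df_p\,\}$; both sets are closed by continuity of $df$, they cover $\mathcal{B}_1$ by the previous sentence, and they are disjoint because $df_p$ is a non-zero isometry (if $p \in E_+ \cap E_-$ then $J_2 \circ df_p = 0$). Connectedness of $\mathcal{B}_1$ forces $E_+ = \mathcal{B}_1$ or $E_- = \mathcal{B}_1$, and the theorem follows.

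\emph{Main obstacle.} The substantive content is the disk input: establishing that every totally-geodesic isometric embedding $\mathbb{CH}^1 \hookrightarrow \mathcal{B}$ of the Poincar\'e disk into a strictly convex bounded domain is holomorphic or anti-holomorphic. This is the hard work of \S\ref{sec:disk-rigid}, to be handled by the complexification strategy advertised in \S\ref{sec:disks} --- the same idea used for Theorem~\ref{thm:disks:intro} --- exploiting the sharp convexity of $\partial \mathcal{B}$ via Lempert's uniqueness of complex geodesics. A subsidiary technicality is ensuring that $df_p$ exists as an $\R$-linear map that varies continuously in $p$; this should follow either from a Myers--Steenrod-type regularity statement for Kobayashi isometries of strictly convex domains, or directly from the fact that $f$ maps each Lempert disk through $p$ to a smoothly parametrized complex geodesic.
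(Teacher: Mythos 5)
Your overall skeleton---compose $f$ with a complex (Lempert) geodesic through $p$ in each complex direction, apply the disk statement to conclude that $df_p$ carries complex lines to complex lines, then run a linear/antilinear dichotomy---is exactly how the paper deduces this theorem (its Theorem~\ref{thm:disk-convex} for disk-rigid domains, of which strictly convex bounded domains are an example), and treating the disk case (Theorem~\ref{thm:disk-rigid-holo}) as the main input is consistent with the paper; your pointwise linear-algebra lemma is also fine. The genuine gap is in the regularity you assume. A totally geodesic isometry for the Kobayashi metric is a priori only distance-preserving; since the Kobayashi metric is locally bi-Lipschitz to a Hermitian metric, $f$ is locally Lipschitz, and all that gives is differentiability at \emph{almost every} point, by Rademacher's theorem---not existence of $df_p$ at every $p$, and certainly not continuity of $p \mapsto df_p$. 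Your globalization via the closed sets $E_\pm$ needs both, and neither of your proposed fixes supplies them: holomorphy of $f\circ\phi_L$ for every Lempert disk through $p$ only yields directional derivatives along those disks and does not imply that $f$ is differentiable at $p$ (let alone $C^1$), while a Myers--Steenrod-type regularity theorem for Kobayashi isometries is precisely the sort of statement that would have to be proved here rather than cited.

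The paper circumvents this as follows: it works only at the almost-every point where $df_p$ exists, obtains the linear-or-antilinear dichotomy there exactly as you do, and then globalizes not by connectedness but by completeness of the Kobayashi metric, which provides a complex geodesic joining any two points; applying the disk theorem along such a geodesic makes the choice of linear versus antilinear consistent on a set of full measure. Finally---a step your proposal omits even if one granted $df_p$ complex-linear at every point---one must upgrade a merely Lipschitz map satisfying the Cauchy--Riemann equations almost everywhere to a genuinely holomorphic map; the paper does this by noting that $f$ is holomorphic as a distribution and invoking Weyl's regularity lemma. Without this elliptic-regularity step (or an equivalent argument), ``infinitesimally holomorphic almost everywhere'' does not yet give the conclusion.
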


This result need not be true for general convex domains. For example, the diagonal map
$\delta(z)=(z, z)$ is a totally-real embedding
$\delta: \mathbb{CH}^1 \hookrightarrow \mathbb{CH}^1\times\overline{\mathbb{CH}^1}$, which is a
totally geodesic isometry for the Kobayashi metric. In particular, the result is not true for
bounded symmetric domains with rank two or more.~\\

\subsection*{Notes and References}~\\
For an introduction to Teichm\"uller spaces and the Kobayashi metric on complex manifolds, we refer
to \cite{Gardiner:Lakic:book},\cite{Hubbard:book:T1} and ~\cite{Kobayashi:book:hyperbolic},
respectively.

H. L. Royden proved that the Kobayashi metric of $\T_{g,n}$ coincides with its classical
Teichm\"uller metric.~\cite{Royden:metric}. When $\text{dim}_{\C}\T_{g,n} = 1$, we can identify
$\T_{g,n}$ equipped with its Kobayashi-Teichm\"uller metric with the unit disk $\mathbb{CH}^1$
equipped with its Poincar\'e metric. In particular, the first instance of
Theorem~\ref{thm:disks:intro} is implicit in the natural isomorphism
$\text{Aut}(\mathbb{CH}^1) \cong \text{Isom}^{+}(\mathbb{CH}^1)$ between the group of holomorphic
automorphisms of the unit disk and the group of orientation-preserving isometries of its Poincar\'e
metric.

There is a natural action of $\text{SL}_2(\mathbb{R})$ on the sphere bundle of unit-area quadratic
differentials $Q_1\T_{g,n}$ over $\T_{g,n}$, so that every orbit projects to a holomorphic totally
geodesic isometry
$\mathbb{CH}^1 \cong \text{SO}_2(\mathbb{R})\setminus\text{SL}_2(\mathbb{R}) \hookrightarrow
\T_{g,n}$,
which is known as a \textit{Teichm\"uller disk}.  It is a classical result that every
\textit{holomorphic} isometry $\mathbb{CH}^1 \hookrightarrow \T_{g,n}$ into a finite-dimensional
Teichm\"uller space is a Teichm\"uller disk. However, neither this result, nor
Theorem~\ref{thm:disks:intro} remain true for infinite-dimensional Teichm\"uller spaces.

A complex analytic proof that totally geodesic disks are holomorphic for strictly convex domains
with $\text{C}^3$-smooth boundary appears in~\cite{Harish:Herve:geodesicdisks}.
Theorem~\ref{thm:disks:other} gives an optimal result for maps between convex domains. We also note
that Teichm\"uller spaces $\T_{g,n}\subset \C^{3g-3+n}$ cannot be realised as convex domains.





\addtocontents{toc}{\protect\setcounter{tocdepth}{1}}

\section{Preliminary results}\label{sec:prelim}

\subsection*{The Kobayashi metric}\label{sec:kobayashi-metric}~\cite{Kobayashi:book:hyperbolic}
Let $\mathcal{B}\subset\C^{N}$ be a bounded domain. The intrinsic Kobayashi metric of $\mathcal{B}$
is the \textit{largest} complex Finsler metric such that every holomorphic map
$f: \mathbb{CH}^1 \rightarrow \mathcal{B}$ is non-expanding: $||df||_{\mathcal{B}}\leq 1$. It
determines both a family of norms $||\cdot||_{\mathcal{B}}$ on tangent spaces and a distance
function $d_{\mathcal{B}}(\cdot,\cdot)$ on pairs of points.

By Schwarz's lemma, every holomorphic map $f:\mathbb{CH}^1\rightarrow \mathbb{CH}^1$ is
non-expanding. The Kobayashi metric provides a natural generalisation - it has the fundamental
property that every holomorphic map between complex domains is non-expanding. In particular, a
holomorphic automorphism is always an isometry and the Kobayashi metric of a complex domain depends
only on its structure as a complex manifold.

\subsection*{Examples}
  \begin{enumerate}[leftmargin=2em]
  \item $\mathbb{CH}^1\cong \{~z\in \C : |z| < 1~\}$ with its Poincar\'e metric $|dz|/(1-|z|^2)$
    coincides with the Kobayashi metric, by Schwarz's lemma. More generally, the Kobayashi metric on
    the unit ball $\mathbb{CH}^N \cong \{~(z_i)_{i=1}^N\in\C^N ~:~ \sum_{i=1}^N|z_i|^2 < 1~\}$
    coincides
    with its complete invariant (Ka\"ehler) metric of constant holomorphic curvature -4.~\\
  \item The Kobayashi metric of the bi-disk $\mathbb{CH}^1\times\overline{\mathbb{CH}^1}$ is the
    maximum metric of the two factors. It is a complex Finsler metric, which is not Hermitian. The
    distance function is given by
    $d_{\mathbb{CH}^1\times \overline{\mathbb{CH}^1}} ((z_1,z_2), (w_1,w_2)) =
    \max\{d_{\mathbb{CH}^1}(z_1,w_1), d_{\mathbb{CH}^1}(z_2,w_2)\}$
    for all points $(z_1,z_2),(w_1,w_2) \in \mathbb{CH}^1\times\overline{\mathbb{CH}^1}$.~\\
  \item The Kobayashi metric of $\T_{g,n}$ coincides with the classical Teichm\"uller metric, which
    endows $\T_{g,n}$ with the structure of a complete geodesic metric space. We discuss this
    example in more detail below.
  \end{enumerate}
\subsection*{Complex geodesics}\label{sec:complex-geodesics}
A holomorphic (or anti-holomorphic) map $\gamma_{\C}: \mathbb{CH}^1 \rightarrow \mathcal{B}$ is
locally distance preserving for the Kobayashi metric if and only if it is a totally geodesic
isometry: $\gamma_{\C}$ sends real geodesics to real geodesics preserving their length. We call such
a map a \textit{complex geodesic}. We note that in this case, for every
$\theta \in\mathbb{R}/2\pi\mathbb{Z}$, the map given by
$\gamma(t) = \gamma_{\C} (e^{i\theta}\text{tanh}(t))$, for $t \in \R$, defines a complete,
unit-speed, real geodesic line in $\mathcal{B}$. When it is clear from the context, we will often
identify real and complex geodesics with their image in $\mathcal{B}$.~\\

\subsection*{Teichm\"uller space}~\cite{Gardiner:Lakic:book},~\cite{Hubbard:book:T1} Let
$\Sigma_{g,n}$ be a connected, oriented surface of genus $g$ and $n$ punctures and $\T_{g,n}$ denote
the Teichm\"uller space of Riemann surfaces marked by $\Sigma_{g,n}$. A point in $\T_{g,n}$ is
specified by an equivalence class\footnote{Two marked Riemann surfaces
  $ \phi: \Sigma_{g,n} \rightarrow X$, $\psi: \Sigma_{g,n}\rightarrow Y$ are equivalent if
  $\psi \circ {\phi}^{-1}: X \rightarrow Y$ is isotopic to a holomorphic bijection.} of orientation
preserving homeomorphisms $\phi: \Sigma_{g,n} \rightarrow X$, where $X$ is a Riemann surface of finite type.

Teichm\"uller space $\T_{g,n}$ is the orbifold universal cover of the moduli space of Riemann
surfaces $\M_{g,n}$ and is naturally a complex manifold with dimension $3g-3+n$. It is known that
Teichm\"uller space can be realized as a contractible bounded domain of holomorphy
$\T_{g,n} \subset\C^{3g-3+n}$ by the Bers embeddings.~\cite{Bers:ts:survey}

\subsection*{Teichm\"uller metric}
For each $X \in \T_{g,n}$, we let $Q(X)$ denote the space of holomorphic quadratic differentials
$q=q(z)(dz)^2$ on $X$ with finite total mass: $ ||q||_{1} = \int_{X} |q(z)||dz|^2 < +\infty$, which
means that $q$ has at worse simple poles at the punctures of $X$. The tangent and cotangent spaces
to Teichm\"uller space at $X\in \T_{g,n}$ are described in terms of the natural pairing
$ (q,\mu) \mapsto \int_{X} q\mu$ between the space $Q(X)$ and the space $M(X)$ of
$L^{\infty}$-measurable Beltrami differentials on $X$; in particular, the tangent $T_{X} \T_{g,n}$
and cotangent $T_{X}^{*} \T_{g,n}$ spaces are naturally isomorphic to $M(X)/Q(X)^{\perp}$ and
$Q(X)$, respectively.

The Teichm\"uller-Kobayashi metric on $\T_{g,n}$ is given by norm duality on the tangent space
$T_{X}\T_{g,n}$ from the norm $||q||_{1} = \int_{X} |q|$ on the cotangent space $Q(X)$ at $X$. The
corresponding distance function is given by the formula
$d_{\T_{g,n}}(X,Y) = \inf \frac{1}{2} \log K(\phi)$ and measures the minimal dilatation $K(\phi)$ of
a quasiconformal map $\phi: X \rightarrow Y$ respecting their markings. 

The Teichm\"uller metric is complete and coincides with the Kobayashi metric of $\T_{g,n}$ as a
complex manifold.~\cite{Royden:metric} In particular, it has the remarkable property that every
holomorphic map $f : \mathbb{CH}^1 \rightarrow \T_{g,n}$ is non-expanding:
$||df||_{\T_{g,n}} \leq 1$.
\subsection*{Holomorphic disks} We summarise below the main results about holomorphic disks in
Teichm\"uller space which we shall employ in the proof of Theorem\ref{thm:disks:intro}.

Complex geodesics in Teichm\"uller space are abundant: there is one through every point in
$\T_{g,n}$ in every complex direction, classically known as Teichm\"uller disks.

Every complex geodesic $\gamma_{\C}: \mathbb{CH}^1 \hookrightarrow \T_{g,n}$ gives rise to a
unit-speed real geodesic $\gamma : \R \hookrightarrow \T_{g,n}$ by
$\gamma(t) = \gamma_{\C} (\text{tanh}(t))$, for $t \in \R$. Conversely, every unit-speed real
geodesic $\gamma: \R \hookrightarrow \T_{g,n}$ extends \textit{uniquely} to a complex geodesic
$\gamma_{\C}: \Delta\cong\mathbb{CH}^1 \hookrightarrow \T_{g,n}$ such that
$\gamma(t) = \gamma_{\C} (\text{tanh}(t))$, for $t \in \R$.

The following result characterises the holomorphic disks in Teichm\"uller space which are complex
geodesics for the Kobayashi metric. See~\cite{Earle:Kra:Krushkal}, for a simple proof based on
Slodkowski's theorem~\cite{Slodkowski:motions}.

\begin{thm}\label{thm:slodkowski}
  Let $f:\Delta\cong\mathbb{CH}^1 \rightarrow \T_{g,n}$ be a holomorphic map with
  $||f'(0)||_{\T_{g,n}}=1$, then $f$ is a totally geodesic isometry for the Kobayashi metric. In
  particular, it is a Teichm\"uller disk.
\end{thm}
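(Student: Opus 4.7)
The plan is to reduce the statement to the classical Schwarz lemma by constructing a holomorphic retraction of $\T_{g,n}$ onto a Teichm\"uller disk tangent to $f$ at the origin. Set $X=f(0)$. Since $||f'(0)||_{\T_{g,n}}=1$, Teichm\"uller's existence theorem for extremal differentials produces a unit quadratic differential $q\in Q(X)$, with $||q||_1=1$, such that $f'(0)\in T_X\T_{g,n}$ is represented by the extremal Beltrami class $[\bar{q}/|q|]$. The associated Teichm\"uller disk $g:\Delta\hookrightarrow\T_{g,n}$ is then a holomorphic isometric embedding with $g(0)=X$ and $g'(0)=f'(0)$.

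Next I would invoke Slodkowski's extension theorem for holomorphic motions to produce a holomorphic retraction onto this disk. The disk $g$ arises from the holomorphic family of Beltrami differentials $\mu_t=t\bar{q}/|q|$, $t\in\Delta$, which induces a holomorphic motion of a suitable finite marked subset of $\hat{\mathbb{C}}$ parametrised by $t\in\Delta$. Slodkowski's theorem extends this to a holomorphic motion of the whole Riemann sphere, and as carried out in \cite{Earle:Kra:Krushkal}, this extension yields a holomorphic map $r:\T_{g,n}\to\Delta$ satisfying $r\circ g=\mathrm{id}_{\Delta}$.

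With $r$ in hand, consider the composition $h=r\circ f:\Delta\to\Delta$. It is holomorphic, and by construction $h(0)=r(X)=0$ and $h'(0)=dr_X(f'(0))=dr_X(g'(0))=(r\circ g)'(0)=1$. The Schwarz lemma then forces $h=\mathrm{id}_{\Delta}$. Since both $r$ and $f$ are Kobayashi non-expanding, for all $z,w\in\Delta$ we have
\[
d_{\Delta}(z,w)=d_{\Delta}(h(z),h(w))\leq d_{\T_{g,n}}(f(z),f(w))\leq d_{\Delta}(z,w),
\]
so equality holds throughout and $f$ is a totally geodesic isometry. Since $f$ and $g$ are now both complex geodesics sharing the same $1$-jet at $0$, the classical uniqueness of Teichm\"uller disks through a point in a given direction gives $f=g$, identifying $f$ as a Teichm\"uller disk.

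The main obstacle is the construction of the retraction $r:\T_{g,n}\to\Delta$ left-inverse to the Teichm\"uller disk $g$; this is precisely the deep analytic input supplied by Slodkowski's theorem on holomorphic motions, and it is what makes the argument of \cite{Earle:Kra:Krushkal} work. Once $r$ is available, everything reduces to a one-line application of the Schwarz lemma to a holomorphic self-map of $\Delta$.
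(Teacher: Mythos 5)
Your reduction would work instantly if the retraction $r$ existed, but that is precisely where the proposal breaks down. A holomorphic map $r\colon \T_{g,n}\to\Delta$ with $r\circ g=\mathrm{id}_\Delta$ is a holomorphic retraction of $\T_{g,n}$ onto the Teichm\"uller disk $g(\Delta)$, and its existence forces the Carath\'eodory and Kobayashi--Teichm\"uller distances to agree on that disk. Since your $f$ (hence the tangent disk $g$) is arbitrary, your argument needs \emph{every} Teichm\"uller disk to be a holomorphic retract, which would make the Carath\'eodory and Teichm\"uller metrics coincide on $\T_{g,n}$. This is false: Markovic (Duke Math.\ J.\ 2018) proved the two metrics differ on $\T_g$ for $g\ge 2$, answering a question of Kra, so there exist Teichm\"uller disks admitting no holomorphic left inverse (Kra had established such retractions only for disks generated by squares of abelian differentials). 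Moreover, Slodkowski's theorem~\cite{Slodkowski:motions} does not produce holomorphic maps \emph{from} $\T_{g,n}$ \emph{to} $\Delta$ at all: it extends holomorphic motions of subsets of the sphere, and in \cite{Earle:Kra:Krushkal} it is used in the opposite direction, to control holomorphic maps from $\Delta$ into Teichm\"uller space. So the ``deep analytic input'' you invoke is not available, and the gap is not repairable by a better construction of $r$.

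The cited argument of \cite{Earle:Kra:Krushkal} (which the paper quotes rather than reproves) keeps your ``Slodkowski plus Schwarz'' philosophy but applies it to a \emph{lift} instead of a retraction. From $f$ with $f(0)=X$ one gets a holomorphic motion over $\Delta$; the (equivariant) Slodkowski extension yields a holomorphic family $z\mapsto\mu(z)$ of Beltrami differentials on $X$ representing $f(z)$ with the sharp bound $\|\mu(z)\|_\infty\le|z|$. Choosing $q\in Q(X)$ with $\|q\|_1=1$ dual to $f'(0)$, the function $h(z)=\int_X q\,\mu(z)$ is holomorphic, satisfies $|h(z)|\le|z|$ and $h'(0)=1$, so $h(z)=z$ by Schwarz's lemma; the equality case of $|\int_X q\,\mu|\le\|q\|_1\,\|\mu\|_\infty$ then forces $\mu(z)=z\,\overline{q}/|q|$ almost everywhere, i.e.\ $f$ \emph{is} the Teichm\"uller disk generated by $q$, hence a totally geodesic isometry. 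Your closing steps (the distance sandwich, uniqueness of the disk with a given $1$-jet) are fine once this is in place, but as written they rest on an object that does not exist in general.
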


The following important result shows that there are no non-trivial holomorphic families of
\textit{essentially proper} holomorphic disks in Teichm\"uller space. It is a consequence of
Sullivan's rigidity theorem ~\cite{Sullivan:linefield}; see~\cite{Tanigawa:holomap} for a proof and
~\cite{McMullen:fermat},~\cite{Shiga:regular} for further applications and related ideas.

\begin{thm}\label{thm:sullivan}
  Let $\{f_t\}_{t\in\Delta}$ be a holomorphic family of holomorphic maps
  $f_t : \Delta\cong\mathbb{CH}^1 \rightarrow \T_{g,n}$. If $\partial\Delta \setminus B_{f_0}$ has
  positive (Lebesgue) measure, where $B_{f_0}$ denotes the set of \textit{bounded rays} of $f_0$,
  ie.
  $B_{f_0} = \{~ e^{i\theta}\in\partial\Delta ~:~ \sup_{t\in[0,1)}d_{\T_{g,n}}(f_0(0),f_0(te^{i\theta})) < +\infty ~\}$,
then the family is \textit{trivial}: $f_t = f_0$ for all $t\in\Delta$.
\end{thm}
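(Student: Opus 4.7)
The plan is to apply Sullivan's rigidity theorem for measurable invariant line fields on the limit set of a finitely generated Kleinian group. I argue by contradiction: suppose $\{f_t\}_{t\in\Delta}$ is non-trivial while $\partial\Delta \setminus B_{f_0}$ has positive measure.

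First, I would encode the family as a Beltrami differential. The holomorphic map $F(t,z) := f_t(z)$ from $\Delta\times\Delta$ into $\T_{g,n}$ determines, via the identification $T_X \T_{g,n} \cong M(X)/Q(X)^{\perp}$, a holomorphic family of Beltrami coefficients $\mu(t,z)$ on the Riemann surfaces $f_0(z)$, representing the first-order variation of $f_t$ in $t$ as $z$ varies. Pulling back the universal curve along $F$ assembles these into a measurable $(-1,1)$-form on a two-parameter family of Riemann surfaces over $\Delta\times\Delta$, and non-triviality of $\{f_t\}$ yields non-vanishing of this form on a set of positive measure.

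Second, I would relate the hypothesis on unbounded rays to limit-set geometry. For $e^{i\theta} \notin B_{f_0}$, by Mumford's compactness theorem the extremal length of some simple closed curve on $\Sigma_{g,n}$ tends to $0$ along the ray $r \mapsto f_0(re^{i\theta})$, so a hyperbolic loop is being pinched. Lifting $\mu$ to the universal cover of $f_0(0)$ and applying Slodkowski's theorem to extend the holomorphic motion of the Fuchsian group uniformising the surfaces $\{f_0(z)\}_{z\in\Delta}$ to all of $\hat{\mathbb{C}}$, one assembles a $\pi_1(\Sigma_{g,n})$-equivariant measurable line field on a subset of $\hat{\mathbb{C}}$. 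The positive-measure pinching data then supplies positive Lebesgue measure on the limit set $\Lambda$ of the associated finitely generated Kleinian group on which this line field is non-trivial.

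Sullivan's rigidity theorem now rules out such a line field, forcing $\mu \equiv 0$ and contradicting non-triviality of $\{f_t\}$. The main obstacle is the geometric construction in the previous step: the careful packaging of the Bers-type simultaneous uniformisation of the family $\{f_0(z)\}_{z\in\Delta}$, together with Slodkowski extension of the boundary motions, into an honest $\Gamma$-equivariant line field supported on a positive-measure subset of a single limit set $\Lambda$. Making this dictionary precise — in particular ensuring that the pinching along different rays $e^{i\theta}$ contributes \emph{compatibly} to a \emph{single} invariant line field, rather than to a family of unrelated partial fields — is the technical heart of Tanigawa's argument, and where I would expect the bulk of the work to lie.
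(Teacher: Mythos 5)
You have picked the right engine---the paper itself offers no proof of this statement but quotes it from Tanigawa \cite{Tanigawa:holomap} as a consequence of Sullivan's rigidity theorem \cite{Sullivan:linefield}---but as it stands your outline has two genuine gaps. First, the step ``$e^{i\theta}\notin B_{f_0}$ implies, by Mumford compactness, that the extremal length of some simple closed curve tends to $0$ along the ray'' is false. Unboundedness in the Teichm\"uller metric does \emph{not} imply that the ray leaves the thick part: a ray whose projection to moduli space stays in a fixed compact set (for instance one lying over a closed geodesic of $\M_{g,n}$) still has $d_{\T_{g,n}}(f_0(0),f_0(te^{i\theta}))\to\infty$. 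Mumford compactness requires divergence in \emph{moduli space}, which is not what the hypothesis $e^{i\theta}\notin B_{f_0}$ gives you, so the ``pinching data'' you propose to feed into the line-field construction need not exist at all.

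Second, the part you defer---packaging the non-triviality of $\{f_t\}$ into a single measurable $\Gamma$-equivariant line field supported on a \emph{positive-measure} subset of the limit set of one finitely generated Kleinian group---is not a technical afterthought; it is essentially the entire content of the theorem, and it is also where your plan runs into a measure-theoretic obstruction (for the relevant groups the limit set can have zero area, so ``positive Lebesgue measure on $\Lambda$'' is precisely the delicate issue). Tanigawa's argument avoids this by a different mechanism: embed $\T_{g,n}$ as a bounded domain via the Bers embedding, so each $f_t$ has radial limits almost everywhere; for $\theta$ in the positive-measure set of unbounded rays the radial limit of $f_0$ is a \emph{boundary} point, represented by a b-group or degenerate group, and Sullivan's rigidity theorem shows such groups admit no nontrivial deformations of the kind produced by the holomorphic family, so the radial limit of $f_t$ at such $\theta$ is independent of $t$; finally a boundary-uniqueness (F.~and M.~Riesz/Privalov-type) theorem applied to the bounded holomorphic function $F(t,z)-F(0,z)$, which has vanishing radial limits on a set of positive measure, forces $f_t\equiv f_0$. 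Your sketch shares the Sullivan-rigidity core but, without the Bers-embedding/radial-limit mechanism (or a corrected substitute for the pinching step and a completed line-field construction), it does not yet constitute a proof.
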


There are other bounded domains that satisfy the same properties about holomorphic disks as
above. We will discuss this class of \textit{disk-rigid} domains and formulate a generalisation of
Theorem~\ref{thm:disks:intro} in \S~\ref{sec:disk-rigid}.






\section{Holomorphic rigidity for Teichm\"uller spaces}\label{sec:disks}                                                                              

In this section we prove:
\begin{thm}\label{thm:disks}
  Every totally geodesic isometry
  $f: \mathbb{CH}^1 \hookrightarrow \T_{g,n}$ for the Kobayashi metric
  is either holomorphic or anti-holomorphic. In particular, it is a
  Teichm\"uller disk.
\end{thm}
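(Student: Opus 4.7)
The plan is to reduce Theorem~\ref{thm:disks} to Sullivan's rigidity (Theorem~\ref{thm:sullivan}) via a complexification argument. It suffices to show that the image $f(\Delta)$ is contained in the image of some Teichm\"uller disk $D: \Delta \to \T_{g,n}$: since $D$ is an injective isometric embedding, $D^{-1}\circ f$ then becomes a metric isometry of $\mathbb{CH}^1$ with its Poincar\'e metric, hence either a M\"obius transformation or its complex conjugate, which forces $f$ to be holomorphic or anti-holomorphic.

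To produce the required disk, I would construct a one-parameter family of Teichm\"uller disks associated with $f$. For each $s \in (-1, 1)$, let $\ell_s \subset \Delta$ be the real geodesic perpendicular to the real axis at $s$. Since $f$ is a totally geodesic isometry, $f(\ell_s)$ is a real geodesic in $\T_{g,n}$, which extends uniquely to a Teichm\"uller disk $D_s : \Delta \to \T_{g,n}$. The crucial step is the complexification: promote the real-parameter family $\{D_s\}_{s \in (-1,1)}$ to a holomorphic family $\{\widetilde D_w\}_{w \in \Delta}$ of holomorphic maps, restricting to $D_s$ on the real diameter. Each Teichm\"uller disk is canonically parametrized by a pointed unit holomorphic quadratic differential, and the data $(f(s), q_s)$ defining $D_s$ should depend real-analytically on $s$ and thus admit a holomorphic extension to the whole disk.

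With the holomorphic family in hand, Sullivan's rigidity applies. Since each $\widetilde D_w$ is a Teichm\"uller disk, its image is an isometric copy of $\mathbb{CH}^1$, so every ray escapes to infinity in $\T_{g,n}$ and the set of bounded rays $B_{\widetilde D_0}$ is empty; in particular $\partial\Delta \setminus B_{\widetilde D_0} = \partial\Delta$ has full measure. Theorem~\ref{thm:sullivan} then collapses the family to a single holomorphic disk: $\widetilde D_w = \widetilde D_0$ for all $w \in \Delta$. In particular $D_s = D_0$ for every $s \in (-1,1)$, so $f(\ell_s) \subseteq D_0(\Delta)$. Since $\bigcup_{s\in(-1,1)} \ell_s = \Delta$, we conclude $f(\Delta) \subseteq D_0(\Delta)$, finishing the proof via the first paragraph.

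The principal obstacle is the complexification step. A priori $f$ is only a metric isometry (and hence Lipschitz), so smoothness of $s \mapsto D_s$ is not automatic, let alone real-analyticity or holomorphic extendability in the complex parameter. This regularity must be bootstrapped from the rigid structure of the Teichm\"uller-Kobayashi norm, its duality with the $L^1$-norm on holomorphic quadratic differentials, and Theorem~\ref{thm:slodkowski} identifying unit-norm holomorphic disks with complex geodesics. Establishing the holomorphic family carefully is where the geometric content of the argument resides; once it is in place, Sullivan's rigidity does the work.
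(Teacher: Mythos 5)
Your overall strategy --- extend along a family of geodesics to Teichm\"uller disks, complexify the family, and collapse it with Sullivan's rigidity --- is the same general idea as the paper's, but the specific family you propose to complexify cannot work, and the step you defer (``the principal obstacle'') is precisely where the entire content of the proof lies. First, the structural problem: your disks $D_s$ are pinned to $f$ along $\ell_s$ (in any natural normalization one has, say, $D_s(0)=f(s)$, the image of the foot point). If a holomorphic family $\{\widetilde D_w\}$ restricting to $\{D_s\}$ existed, then your own application of Theorem~\ref{thm:sullivan} would force $D_s=D_{s_0}$ \emph{as maps}, hence $f(s)=D_s(0)=D_{s_0}(0)=f(s_0)$ for all $s$, contradicting that $f$ is an isometry. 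Even in the model case where $f$ is itself a Teichm\"uller disk, the maps $D_s=f\circ A_s\circ(i\,\cdot)$ (with $A_s(z)=\frac{z+s}{1+sz}$) all have the same image but genuinely different parametrizations, so the family is nonconstant and therefore, by the very rigidity you invoke, cannot be holomorphic in the parameter; concretely, the naive complexification $\frac{w+iz}{1+iwz}$ of the M\"obius data no longer maps into the disk for complex $w$, because a disk automorphism involves $\bar w$, which is anti-holomorphic in $w$. So it is not merely that regularity ``must be bootstrapped'': the complexification you ask for does not exist, and the collapse you want (equality of \emph{images}) cannot be extracted from a statement about equality of \emph{maps} in this way without already knowing the theorem.

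This is exactly why the paper complexifies $f$ itself rather than a family of disks, and why it does so in $\mathbb{CH}^1\times\overline{\mathbb{CH}^1}$ (conjugate structure on the second factor). There, the graphs $\Gamma_r$ of the anti-holomorphic reflections $r$ are holomorphic copies of $\mathbb{CH}^1$ meeting the totally real diagonal along the geodesics $\mathrm{Fix}(r)$, and one defines $F|_{\Gamma_r}$ to be the Teichm\"uller disk extending $f|_{\mathrm{Fix}(r)}$. Holomorphy of $F$ is the hard analytic point: it is obtained from Bernstein's separate-analyticity theorem in the Klein model (Lemma~\ref{lem-real-analytic}), which gives a holomorphic germ of $f$ along the diagonal, together with holomorphy along the leaves and a separate-holomorphy argument. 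Sullivan's rigidity is then applied not to the family of Teichm\"uller disks but to the slice families $F(\cdot,\bar w)$ and $F(\bar z,\cdot)$; here essential properness is \emph{not} automatic (unlike for your $D_s$), and the flip symmetry $(z,w)\in\Gamma_r\iff(w,z)\in\Gamma_r$, which gives $d_{\T_{g,n}}(F(z,w),F(w,z))=d_{\mathbb{CH}^1}(z,w)$, is used to show at least one of the two slice directions is essentially proper; triviality of that slice family then yields $F(z,w)=F(z,z)=f(z)$ (holomorphic case) or $F(z,w)=F(w,w)=f(w)$ (anti-holomorphic case). In short: your reduction in the first paragraph is fine, but the construction of the holomorphic object to which Sullivan's theorem is applied must be the two-variable extension of $f$ over the totally real diagonal, not a one-parameter family of parametrized Teichm\"uller disks, and carrying out that extension (via separate analyticity and the foliation by graphs of reflections) is the missing core of the argument.
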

The proof of the theorem uses the idea of \textit{complexification}
and leverages the following two facts.  Firstly, a complete real
geodesic in $\T_{g,n}$ is contained in a unique holomorphic
Teichm\"uller disk; and secondly, a holomorphic family
$\{f_t\}_{t\in\Delta}$ of \textit{essentially proper} holomorphic maps
$f_t : \mathbb{CH}^1 \rightarrow \T_{g,n}$ is \textit{trivial}:
$f_t = f_0$ for $t\in\Delta$ (Sullivan's rigidity theorem, see
~\cite{Tanigawa:holomap} for a precise statement and proof).

\subsection*{Outline of the proof} 
Let $\gamma \subset \mathbb{CH}^1$ be a complete real geodesic and
denote by
$\gamma_{\C} \subset \mathbb{CH}^1\times\overline{\mathbb{CH}^1}$ its
\textit{maximal} holomorphic extension to the bi-disk. We note that
$\gamma_{\C} \cong \mathbb{CH}^1$ and we define $F|_{\gamma_{\C}}$ to
be the \textit{unique} holomorphic extension of $f|_{\gamma}$, which
is a Teichm\"uller disk.

Applying this construction to all (real) geodesics in $\mathbb{CH}^1$,
we will deduce that $f: \mathbb{CH}^1 \rightarrow \T_{g,n}$ extends to
a \textit{holomorphic} map
$F:\mathbb{CH}^1\times\overline{\mathbb{CH}^1}\rightarrow \T_{g,n}$
such that $f(z)=F(z,z)$ for $z\in \Delta \cong \mathbb{CH}^1$. Using
that $f$ is totally geodesic, we will show that $F$ is
\textit{essentially} proper and hence, by Sullivan's rigidity theorem,
we will conclude that either $F(z,w) = F(z,z)$ or $F(z,w)=F(w,w)$, for
all $(z,w)\in \mathbb{CH}^1\times\overline{\mathbb{CH}^1}$.\qed
~\\

\large
\[
  \xymatrix{
    \mathbb{CH}^1 \times \overline{\mathbb{CH}^1} \ar@{->}^F[rd] \\
    \mathbb{CH}^1 \ar@{^{(}->}^{\delta}[u] \ar@{^{(}->}^f[r] &\T_{g,n} }
\]
~\\
\normalsize

We start with some preliminary constructions.
\subsection*{The totally real diagonal}
Let $\overline{\mathbb{CH}^1}$ be the complex hyperbolic line with its
conjugate complex structure. The identity map is a \textit{canonical}
anti-holomorphic isomorphism
$\mathbb{CH}^{1}\cong\overline{\mathbb{CH}^1}$ and its graph is a
totally real embedding
$\delta: \mathbb{CH}^{1} \hookrightarrow
\mathbb{CH}^{1}\times\overline{\mathbb{CH}^1}$,
given by $\delta(z)=(z,z)$ for $z\in \Delta\cong \mathbb{CH}^{1}$. We
call $\delta(\mathbb{CH}^{1})$ the \textit{totally real diagonal}.

\subsection*{Geodesics and graphs of reflections}
Let $\mathcal{G}$ denote the set of all real, unoriented, complete
geodesics $\gamma \subset \mathbb{CH}^{1}$. In order to describe their
\textit{maximal} holomorphic extensions
$\gamma_{\C} \subset \mathbb{CH}^1\times\overline{\mathbb{CH}^1}$,
such that $\gamma_{\C} \cap \delta(\mathbb{CH}^1) = \delta(\gamma)$,
it is convenient to parametrize $\mathcal{G}$ in terms of the set
$\mathcal{R}$ of hyperbolic reflections of $\mathbb{CH}^1$ - or
equivalently, the set of anti-holomorphic involutions of
$\mathbb{CH}^1$. The map that associates a reflection
$r\in \mathcal{R}$ with the set
$\gamma = \text{Fix}(r) \subset \mathbb{CH}^{1}$ of its fixed points
gives a bijection between $\mathcal{R}$ and $\mathcal{G}$.

Let $r\in\mathcal{R}$ and denote its graph by
$\Gamma_r\subset\mathbb{CH}^{1}\times\overline{\mathbb{CH}^{1}}$;
there is a natural holomorphic isomorphism
$\mathbb{CH}^1 \cong \Gamma_r$, given by $z \mapsto (z,r(z))$ for
$z\in \Delta \cong \mathbb{CH}^1$. We note that $\Gamma_r$ is the
\textit{maximal} holomorphic extension $\gamma_{\C}$ of the geodesic
$\gamma = \text{Fix}(r)$ to the bi-disk and it is \textit{uniquely}
determined by the property
$\gamma_{\C} \cap \delta(\mathbb{CH}^1) = \delta(\gamma)$.

\subsection*{The foliation by graphs of reflections}
The union of the graphs of reflections
$\bigcup_{r\in\mathcal{R}}\Gamma_r$ gives rise to a (singular)
foliation of $\mathbb{CH}^{1}\times\overline{\mathbb{CH}^{1}}$ with
holomorphic leaves $\Gamma_r$ parametrized by the set
$\mathcal{R}$. We have
$\displaystyle\Gamma_r \cap \delta(\mathbb{CH}^{1}) =
\delta(\text{Fix}(r))$ for all $r\in\mathcal{R}$, and
\begin{equation}\label{eq:leaves}
\displaystyle\Gamma_r \cap \Gamma_s = \delta(\text{Fix}(r) \cap \text{Fix}(s))
\end{equation}
which is either empty or a single point for all $r,s \in \mathcal{R}$
with $r \neq s$. In particular, the foliation is smooth in the
complement of the totally real diagonal $\delta(\mathbb{CH}^{1})$.

We emphasize that the following simple observation plays a key role in
the proof of the theorem. For all $r\in\mathcal{R}$:
\begin{equation}
  \label{eq:flip}
  (z,w) \in \Gamma_r \iff (w,z) \in \Gamma_r 
\end{equation}
\subsection*{Geodesics and the Klein model}\label{sec:klein-model}
The Klein model gives a real-analytic identification
$\mathbb{CH}^1\cong\mathbb{RH}^2\subset\R^2$ with an open disk in
$\R^2$. It has the nice property that the hyperbolic geodesics are
affine straight lines intersecting the disk.~\cite{Ratcliffe:book}
\begin{remark}
  The holomorphic foliation by graphs of reflections defines a
  \textit{canonical} complex structure in a neighborhood of the zero
  section of the tangent bundle of $\mathbb{RH}^2$.
\end{remark}
The description of geodesics in the Klein model is convenient in the
light of the following theorem of S.~Bernstein.

\begin{thm}\label{thm:bernstein}(\cite{Ahiezer:Ronkin:Bernstein};~S.~Bernstein)
  Let $M$ be a complex manifold, $f: [0,1]^2 \rightarrow M$ a map from
  the square $[0,1]^2 \subset \R^2$ into $M$ and $E\subset \C$ an
  ellipse with foci at $0,1$. If there are holomorphic maps
  $F_{\ell} : E \rightarrow M$ such that
  $F_{\ell}|_{[0,1]} = f|_{\ell}$, for all vertical and horizontal
  slices $\ell\cong [0,1]$ of $[0,1]^2$, then $f$ has a unique
  holomorphic extension in a neighborhood of $[0,1]^2$ in $\C^2$.
\end{thm}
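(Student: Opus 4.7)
The plan is to assemble the slice extensions into a candidate function on the ``cross'' $X := (E\times[0,1]) \cup ([0,1]\times E) \subset \C^2$ and then apply a Bernstein-style polynomial-approximation argument to upgrade this to a jointly holomorphic extension on a $\C^2$-neighborhood of $[0,1]^2$. Since the conclusion is local on $M$ and $f([0,1]^2)$ is compact, I first reduce to the case $M$ is a polydisk in $\C^N$, and componentwise to the complex-valued case $M=\C$.

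In that setting, define $F(z,y) := F_{\ell^h_y}(z)$ on $E\times[0,1]$ and $F(x,w) := F_{\ell^v_x}(w)$ on $[0,1]\times E$, where $\ell^h_y$ and $\ell^v_x$ are the horizontal and vertical slices. Both definitions restrict to $f$ on the overlap $[0,1]^2$, so $F$ is well-defined on $X$ and separately holomorphic in each variable on $E$ while the other is held in $[0,1]$. To extend $F$ across $X$ to a jointly holomorphic function near $[0,1]^2$, I would use Chebyshev interpolation: pick Chebyshev nodes $x_1, \ldots, x_n \in [0,1]$ and form
\[
P_n(z, w) := \sum_{i=1}^{n} F_{\ell^v_{x_i}}(w)\, L_i(z),
\]
a polynomial of degree $<n$ in $z$ with coefficients holomorphic in $w \in E$. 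The one-dimensional Bernstein theorem (holomorphic extendability to $E$ is equivalent to geometric polynomial approximation on $[0,1]$) yields $P_n \to f$ geometrically on $[0,1]^2$, while the dual Bernstein--Walsh inequality converts a sup bound for a polynomial of degree $n$ on $[0,1]$ into one of order $\rho^n$ on a sub-ellipse $E_\rho \subset E$.

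The main obstacle is that the hypothesis supplies only one extension per slice and no a priori uniform bound for the family $\{F_{\ell^v_x}\}_{x \in [0,1]}$, which is what is needed to control the Chebyshev coefficients $F_{\ell^v_{x_i}}(w)$ uniformly on compact subsets of $E$. I would address this by a Baire-category argument: for each compact $K \Subset E$, the sets $A_{M, K} := \{x \in [0,1] : \sup_K |F_{\ell^v_x}| \leq M\}$ exhaust $[0,1]$, so some $A_{M,K}$ is somewhere dense in an interval $I$; on $E \times I$ the family becomes equibounded. Applying the analogous argument with the horizontal-slice extensions bootstraps this uniform control across the whole square, producing a $\C^2$-neighborhood of $[0,1]^2$ on which $\{P_n\}$ is a normal family. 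Any cluster value is jointly holomorphic and restricts to $f$ on $[0,1]^2$; uniqueness of the extension on the resulting $\C^2$-neighborhood follows from the identity principle applied to the real square.
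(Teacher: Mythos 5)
The paper does not prove this statement (it is quoted from Akhiezer--Ronkin), so your proposal can only be measured against the classical argument, and as written it stops exactly where the classical difficulty begins. Your Baire-category step, even granting the closedness of the sets $A_{M,K}$ (which itself needs an argument: Montel plus separate continuity plus identification of limits through values on a subinterval of $[0,1]$, so $K$ must meet $[0,1]$ in a set of uniqueness), only produces \emph{some} subinterval $I$ on which the vertical-slice extensions are equibounded; you have no control over where $I$ sits, and the complement of the set of points near which equiboundedness holds can be a nonempty nowhere dense set. The claim that ``applying the analogous argument with the horizontal-slice extensions bootstraps this uniform control across the whole square'' is the gap: equiboundedness of $\{F_{\ell^v_x}\}_{x\in[0,1]}$ on compact subsets of $E$ is simply false in general (one can write down jointly analytic $f$ near the square whose vertical slices extend to all of $E$ but with sup norms on a fixed compact of $E$ blowing up as $x$ approaches an interior point of $[0,1]$), and without it your $P_n$ is not even geometrically convergent on $[0,1]^2$, let alone a normal family on a $\C^2$-neighborhood of the square. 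What the hypotheses can yield is control on a thin complex neighborhood of $[0,1]$ only, and passing from bounds over the sub-rectangle produced by Baire to bounds over the whole square is the genuinely two-variable propagation step of the theorem. Classically this is done by applying Hartogs' lemma on subharmonic functions (or the two-constants/Bernstein--Walsh machinery, as in Akhiezer--Ronkin, or Siciak's extremal-function method) to the functions $u_n(z)=\tfrac1n\log|P_{n+1}(z,\cdot)-P_n(z,\cdot)|$, which are subharmonic in $z$ on all of $E$ because each coefficient $F_{\ell^h_{y_i}}$ is holomorphic there, are uniformly bounded above on compacts, and are $\leq$ a negative constant on the interval supplied by Baire; the Hartogs-type upper-envelope estimate then forces geometric decay of $P_{n+1}-P_n$ on a full neighborhood of $[0,1]^2$. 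Nothing in your outline substitutes for this step.

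Two further, more minor points. First, your opening reduction to a polydisk presupposes that $f$ maps a small sub-square into a coordinate chart, i.e.\ some continuity of $f$; but $f$ is a priori only separately real-analytic (joint continuity is part of the conclusion). In the paper's application $M=\T_{g,n}$ is a bounded domain in $\C^N$, so the reduction is harmless there, but for a general complex manifold you must either argue continuity first or phrase the approximation argument so that it runs before the chart reduction. Second, the uniqueness assertion is fine: two holomorphic functions on a connected neighborhood of $[0,1]^2$ agreeing on the totally real square agree identically; just make sure the neighborhood you produce is connected.
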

We use this to prove:
\begin{lem}\label{lem-real-analytic}
  Every totally geodesic isometry
  $f:\mathbb{CH}^{1} \hookrightarrow \T_{g,n}$ admits a unique
  holomorphic extension in a neighborhood of the totally real diagonal
  $\delta(\mathbb{CH}^{1})\subset\mathbb{CH}^{1}\times\overline{\mathbb{CH}^{1}}$.
\end{lem}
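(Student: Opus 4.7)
The plan is to derive the lemma from Bernstein's Theorem~\ref{thm:bernstein}, using the Klein model of $\mathbb{CH}^1$ to turn real geodesics into affine line segments and using Teichm\"uller disks to supply the per-slice holomorphic extensions that Bernstein requires. The target $\T_{g,n}$ is a genuine complex manifold (indeed a bounded domain in $\C^{3g-3+n}$ via Bers embedding), so Bernstein's theorem applies.

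To set this up, I would fix a point $p \in \mathbb{CH}^1$ and a small Klein-coordinate square $S \subset \mathbb{CH}^1 \cong \mathbb{RH}^2$ centered at $p$. Every horizontal or vertical slice $\ell \cong [0,1]$ of $S$ is, in the Klein model, an arc of a complete real geodesic of $\mathbb{CH}^1$. Since $f$ is a totally geodesic isometry for the Kobayashi metric, $f(\ell)$ is an arc of a complete real geodesic of $\T_{g,n}$, which extends uniquely to a holomorphic Teichm\"uller disk $\gamma_{\C} \colon \mathbb{CH}^1 \hookrightarrow \T_{g,n}$. Reparametrizing by an affine (hence holomorphic) change of variable identifying $\ell$ with $[0,1]$, I obtain a holomorphic map $F_{\ell}$ from a neighborhood of $[0,1]$ in $\C$ into $\T_{g,n}$ whose restriction to $[0,1]$ coincides with $f|_{\ell}$.

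Next, I would observe that by taking the square $S$ small enough, a single ellipse $E \subset \C$ with foci at $0$ and $1$ can be chosen so that every $F_{\ell}$ is defined on $E$. The reason is that each Teichm\"uller disk extension is defined on a full unit disk, while the segment $\ell$ sits inside that disk with length proportional to the diameter of $S$, so the affine reparametrization sending $\ell$ to $[0,1]$ blows the domain of $F_{\ell}$ up to an arbitrarily large neighborhood of $[0,1]$ as $S$ shrinks. Bernstein's Theorem~\ref{thm:bernstein} then provides a unique holomorphic extension of $f|_S$ to a neighborhood of $S$ inside $\mathbb{CH}^1 \times \overline{\mathbb{CH}^1}$, regarded as a neighborhood of the portion $\delta(S)$ of the totally real diagonal.

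Finally, covering $\delta(\mathbb{CH}^1)$ by such Klein squares and gluing the resulting local extensions produces a holomorphic map $F$ in a neighborhood of $\delta(\mathbb{CH}^1)$ with $F \circ \delta = f$. Uniqueness is immediate from the general principle that a holomorphic map on a neighborhood of a totally real submanifold of maximal real dimension is determined by its values on that submanifold; this also ensures that the local extensions glue consistently on overlapping squares. The main technical point I expect to work through carefully is the uniform choice of ellipse $E$ across all slices of $S$, which reduces to a quantitative comparison between the affine Klein-coordinate parametrization of $\ell$ and the $\tanh$-parametrization of the corresponding Teichm\"uller disk; I do not anticipate any serious obstacle here.
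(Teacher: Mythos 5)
Your proposal is correct and follows essentially the same route as the paper: straighten geodesics via the Klein model, extend $f$ along each horizontal and vertical slice to a holomorphic Teichm\"uller disk, apply Bernstein's Theorem~\ref{thm:bernstein} with a uniform ellipse, and glue using uniqueness of holomorphic extensions off the totally real diagonal. The only point the paper makes explicit that you defer to the end (the non-affine real-analytic change of parameter between the Klein coordinate and the disk coordinate of the Teichm\"uller disk, controlled by keeping the slice lengths uniformly bounded above and below) is handled there at essentially the same level of detail you indicate.
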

\begin{proof}[Proof of~\ref{lem-real-analytic}]
  Using the fact that analyticity is a local property and the
  description of geodesics in the Klein model of $\mathbb{RH}^2$, we
  can assume - without loss of generality - that the map $f$ is
  defined in a neighborhood of the unit square $[0,1]^2$ in $\R^2$ and
  has the property that its restriction on every horizontal and
  vertical line segment $\ell \cong [0,1]$ is a real-analytic
  parametrization of a Teichm\"uller geodesic segment. Moreover, we
  can also assume that the lengths of all these segments, measured in
  the Teichm\"uller metric, are uniformly bounded from above and from
  below away from zero.

  Since every segment of a Teichm\"uller geodesic extends to a
  (holomorphic) Teichm\"uller disk in $\T_{g,n}$, there exists an
  ellipse $E\subset\C$ with foci at $0$,$1$ such that the restrictions
  $f|_{\ell}$ extend to holomorphic maps
  $F_{\ell}: E \rightarrow \T_{g,n}$ for all horizontal and vertical
  line segments $\ell\cong [0,1]$ of $[0,1]^2$.  Hence, the proof of
  the lemma follows from Theorem~\ref{thm:bernstein}.
\end{proof}
\begin{remark}
  See~\cite{Shiffman:separate:analyticity}, for a strongest result regarding separate analyticity.
\end{remark}
\subsection*{Proof of Theorem~\ref{thm:disks}}~\\
Let $f:\mathbb{CH}^{1} \hookrightarrow \T_{g,n}$ be a totally geodesic
isometry. Applying Lemma~\ref{lem-real-analytic}, we deduce that $f$
has a \textit{unique} holomorphic extension in a neighborhood of the
totally real diagonal
$\delta(\mathbb{CH}^{1})\subset\mathbb{CH}^{1}\times\overline{\mathbb{CH}^{1}}$. We
will show that $f$ extends to a
holomorphic map from $\mathbb{CH}^{1}\times\overline{\mathbb{CH}^{1}}$ to $\T_{g,n}$.~\\
We start by defining a \textit{new} map
$F:\mathbb{CH}^{1}\times\overline{\mathbb{CH}^{1}} \rightarrow
\T_{g,n}$, satisfying:

1. $F(z,z)=f(z)$ for all $z\in\Delta \cong \mathbb{CH}^{1}$.

2. $F|_{\Gamma_r}$ is the \textit{unique} holomorphic extension of
$f|_{\text{Fix}(r)}$ for all $r\in\mathcal{R}$.
~\\
Let $r\in\mathcal{R}$ be a reflection. There is a \textit{unique}
(holomorphic) Teichm\"uller disk
$\phi_r:\mathbb{CH}^{1}\hookrightarrow\T_{g,n}$ such that the
intersection
$\phi_r(\mathbb{CH}^1)\cap f(\mathbb{CH}^{1})\subset \T_{g,n}$
contains the Teichm\"uller geodesic $f(\text{Fix}(r))$ and
$\phi_r(z)=f(z)$ for all $z\in\text{Fix}(r)$.

We define $F$ by $F(z,r(z))=\phi_r(z)$ for $z\in\mathbb{CH}^{1}$ and
$r\in\mathcal{R}$; equation~(\ref{eq:leaves}) shows that $F$ is
well-defined and satisfies conditions (1) and (2) above.

We \textit{claim} that
$F:\mathbb{CH}^{1}\times\overline{\mathbb{CH}^{1}}\rightarrow
\T_{g,n}$
is the \textit{unique} holomorphic extension of
$f: \mathbb{CH}^1 \hookrightarrow \T_{g,n}$ such that $F(z,z)=f(z)$
for $z \in\mathbb{CH}^1$.

\textit{Proof of claim}. We note that the restriction of $F$ on the
totally real diagonal $\delta(\mathbb{CH}^{1})$ agrees with $f$ and
that there is a \textit{unique} germ of holomorphic maps near
$\delta(\mathbb{CH}^{1})$ whose restriction on
$\delta(\mathbb{CH}^{1})$ coincides with $f$. Let us fix an element of
this germ $\tilde{F}$ defined on a neighborhood
$U\subset\mathbb{CH}^{1}\times\overline{\mathbb{CH}^{1}}$ of
$\delta(\mathbb{CH}^{1})$. For every $r\in\mathcal{R}$, the
restrictions of $F$ and $\tilde{F}$ on the intersection
$U_r= U \cap \Gamma_r$ are holomorphic and equal along the
real-analytic arc $U_r \cap \delta(\mathbb{CH}^{1}) \subset U_r$;
hence they are equal on $U_r$. Since
$\mathbb{CH}^{1}\times\overline{\mathbb{CH}^{1}} =
\bigcup_{r\in\mathcal{R}}\Gamma_r$,
we conclude that $F|_{U}=\tilde{F}$ and, in particular, $F$ is
holomorphic near the totally real diagonal $\delta(\mathbb{CH}^{1})$.
Since, in addition to that, $F$ is holomorphic along all the leaves
$\Gamma_r$ of the foliation, we deduce~\footnote{For a simple proof of
  this claim using the power series expansion of $F$ at
  $(0,0)\in\mathbb{CH}^{1}\times\overline{\mathbb{CH}^{1}}$,
  see~\cite[Lemma~2.2.11]{Hormander:book}.} that it is holomorphic at
all points of $\mathbb{CH}^{1}\times\overline{\mathbb{CH}^{1}}$.\qed

In order to finish the proof of the theorem, we use the \textit{key}
observation~(\ref{eq:flip}); which we recall as follows: the points
$(z,w)$ and $(w,z)$ are always contained in the same leaf $\Gamma_r$
of the foliation for all $z,w\in\Delta\cong\mathbb{CH}^{1}$. Using the
fact that the restriction of $F$ on every leaf $\Gamma_{r}$ is a
Teichm\"uller disk, we conclude that
$d_{\T_{g,n}}(F(z,w),F(w,z))=d_{\mathbb{CH}^{1}}(z,w)$.
  
Let $\theta \in \mathbb{R}/2\pi\mathbb{Z}$, it follows that at least
one of $\displaystyle F(\rho e^{i\theta},0)$ and
$\displaystyle F(0,\rho e^{i\theta})$ diverges in Teichm\"uller space
as $\rho \rightarrow 1$. In particular, there is a subset
$I\subset \mathbb{R}/2\pi\mathbb{Z}$ with positive measure such that
either $F(\rho e^{i\theta},0)$ or
$\displaystyle F(0,\rho e^{i\theta})$ diverges as $\rho \rightarrow 1$
for all $\theta \in I$.

We assume first that the former of the two is true. Using that
$F: \mathbb{CH}^1\times\overline{\mathbb{CH}^1} \rightarrow \T_{g,n}$
is holomorphic, we deduce from~\cite{Tanigawa:holomap} (Sullivan's
rigidity theorem) that the family $\{F(z,\overline{w})\}_{w\in\Delta}$
of holomorphic maps
$ F(\cdot,\overline{w}): \Delta\cong\mathbb{CH}^1 \rightarrow
\T_{g,n}$
for $w\in \Delta\cong\mathbb{CH}^1$ is \textit{trivial}. Therefore,
$F(z,0)=F(z,z)=f(z)$ for all $z\in \Delta$ and, in particular, $f$ is
holomorphic. If we assume that the latter of the two is true we
similarly deduce that $F(0,z)=F(z,z)=f(z)$ for all $z\in \Delta$ and,
in particular, $f$ is anti-holomorphic.\qed


\section{The class of disk-rigid domains}\label{sec:disk-rigid}
In this section we formulate a general theorem that applies to a large class of bounded domains,
which we apply to deduce Corollary~\ref{cor:intro} and Theorem~\ref{thm:disks:other}.~\\

Let $\mathcal{B}\subset\C^N$ be a bounded domain and $f:\Delta\rightarrow \mathcal{B}$ a holomorphic
map. We call the map $f$ \textit{essentially proper} if $\partial\Delta \setminus B_f$ has positive
(Lebesgue) measure, where $B_f$ denotes the set of \textit{bounded rays}, ie.
$B_f = \{~ e^{i\theta}\in\partial\Delta ~:~ \sup_{t\in[0,1)}d_{\mathcal{B}}(f(0),f(te^{i\theta})) <
+\infty ~\}$.

\begin{defn}\label{defn:disk-rigid}
  A bounded domain $\mathcal{B}\subset\C^{N}$ is \textit{disk-rigid}, if it satisfies:
  \begin{enumerate}[leftmargin=3em]
  \item every unit-speed geodesic $\gamma: \R \hookrightarrow \mathcal{B}$, for the Kobayashi
    metric, extends to a complex geodesic
    $\gamma_{\C}: \Delta\cong\mathbb{CH}^1 \hookrightarrow \mathcal{B}$ such that
    $\gamma(t) = \gamma_{\C} (\text{tanh}(t))$, for $t \in \R$,
  \item every holomorphic family $\{f_t\}_{t\in\Delta}$ of holomorphic maps
    $f_t : \Delta\cong\mathbb{CH}^1 \rightarrow \mathcal{B}$, with $f_0$ an \textit{essentially
      proper} map, is \textit{trivial} ie. $f_t = f_0$ for all $t\in\Delta$.
  \end{enumerate}
\end{defn}

\subsection*{Examples} 
 \begin{enumerate}[leftmargin=2em]
 \item Teichm\"uller spaces $\T_{g,n}$ of finite dimension are \textit{disk-rigid}. See
   ~\S~\ref{sec:prelim}, Theorems~\ref{thm:slodkowski},~\ref{thm:sullivan}.~\\

 \item The bi-disk $\mathbb{CH}^1\times\overline{\mathbb{CH}^1}$ is a convex domain that is not
   \textit{disk-rigid}. A bounded symmetric domain $\mathcal{B}\subset\C^N$ is \textit{disk-rigid}
   if and only if it has rank one: $\mathcal{B}\cong \mathbb{CH}^N$.~\\

 \item All \textit{strictly} convex bounded domains $\mathcal{B}\subset \C^N$ are
   \textit{disk-rigid}. We recall that a domain $\mathcal{B}\subset \C^N$ is \textit{strictly}
   convex if $\{~t\cdot P + (1-t)\cdot Q ~:~ t\in(0,1) ~\} \subset \mathcal{B}$ for every pair of
   distinct points $P\not =Q$ in the closure $\overline{\mathcal{B}} \subset
   \C^N$. See~\cite{Nikolov:Pflug:Zwonek:Kobayashi:convex}~\\
 \end{enumerate}

 The proof of Theorem~\ref{thm:disks} in \S~\ref{sec:disks} used only those features of $\T_{g,n}$
 captured in the definition of a \textit{disk-rigid} domain. In particular, the following result
 follows as well.
 \begin{thm} \label{thm:disk-rigid-holo} 
   Let $\mathcal{B}\subset\C^N$ be a \textit{disk-rigid} domain. Every totally geodesic isometry
   $f: \mathbb{CH}^1 \hookrightarrow \mathcal{B}$ for the Kobayashi metric is either holomorphic or
   anti-holomorphic.
 \end{thm}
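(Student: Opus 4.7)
The plan is to repeat the argument given for Theorem~\ref{thm:disks} in \S\ref{sec:disks} essentially verbatim, keeping a careful eye on which ingredients are genuinely needed. The proof of Theorem~\ref{thm:disks} invoked $\T_{g,n}$ only through the two facts that every real geodesic lies in a unique complex geodesic (Theorem~\ref{thm:slodkowski}) and that essentially proper holomorphic disks form rigid holomorphic families (Theorem~\ref{thm:sullivan}). These are precisely axioms~(1) and~(2) of Definition~\ref{defn:disk-rigid}, so the substitution of $\mathcal{B}$ for $\T_{g,n}$ should be formal.

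First I would reprove the real-analytic extension lemma (Lemma~\ref{lem-real-analytic}) with $\T_{g,n}$ replaced by an arbitrary disk-rigid $\mathcal{B}$. The Klein model reduction is independent of the target; the restrictions $f|_\ell$ of $f$ to horizontal and vertical segments in $[0,1]^2$ are unit-speed geodesic segments in $\mathcal{B}$, and by axiom~(1) each such segment lies in a complex geodesic $F_\ell : \Delta \hookrightarrow \mathcal{B}$. Uniform bounds on the Kobayashi lengths of these segments give a common ellipse $E$ with foci $0,1$ over which all the $F_\ell$ are defined, so Bernstein's Theorem~\ref{thm:bernstein} supplies a unique holomorphic extension in a neighborhood of $\delta(\mathbb{CH}^1) \subset \mathbb{CH}^1 \times \overline{\mathbb{CH}^1}$.

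Next, I would build the global candidate $F:\mathbb{CH}^1\times\overline{\mathbb{CH}^1}\to\mathcal{B}$ leaf-by-leaf along the foliation by graphs of reflections $\{\Gamma_r\}_{r\in\mathcal{R}}$. For each $r\in\mathcal{R}$, axiom~(1) produces a unique complex geodesic $\phi_r:\mathbb{CH}^1\hookrightarrow \mathcal{B}$ extending $f|_{\mathrm{Fix}(r)}$, and I set $F(z,r(z))=\phi_r(z)$. The intersection identity~(\ref{eq:leaves}) makes $F$ well-defined on $\mathbb{CH}^1\times\overline{\mathbb{CH}^1}$ and the local holomorphic extension from the previous paragraph coincides with $F$ on each leaf near the diagonal (by analytic continuation along the real-analytic arc $U\cap \Gamma_r$). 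Thus $F$ is holomorphic on a full neighborhood of the diagonal and separately holomorphic along every leaf of a foliation that covers the bi-disk; the argument using the power series expansion at $(0,0)$ (as cited from Hörmander) then upgrades this to joint holomorphy of $F$ on all of $\mathbb{CH}^1\times\overline{\mathbb{CH}^1}$.

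Finally I would run the essential-properness argument exactly as in \S\ref{sec:disks}. Observation~(\ref{eq:flip}) together with the fact that each $F|_{\Gamma_r}$ is a Kobayashi isometry gives $d_{\mathcal{B}}(F(z,w),F(w,z))=d_{\mathbb{CH}^1}(z,w)$ for all $z,w$, hence for every $\theta$ at least one of $F(\rho e^{i\theta},0)$ and $F(0,\rho e^{i\theta})$ leaves every compact set as $\rho\to 1$. Pick a positive-measure $I\subset \R/2\pi\Z$ on which, say, $F(\rho e^{i\theta},0)$ diverges; then the slice $F(\cdot,0):\Delta\to\mathcal{B}$ is essentially proper and axiom~(2) forces the holomorphic family $\{F(\cdot,\overline{w})\}_{w\in\Delta}$ to be constant in $w$, giving $f(z)=F(z,z)=F(z,0)$ holomorphic. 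The other case yields $f$ anti-holomorphic. The main point of vigilance is paragraph two: I need to confirm that in an arbitrary disk-rigid $\mathcal{B}$ one can really secure a common ellipse $E$ uniformly over all horizontal/vertical slices, which follows because the Kobayashi lengths of the slices vary continuously and are bounded away from $0$ on a compact square, so the complex geodesic extensions given by axiom~(1) provide a uniform domain of holomorphy for Bernstein's theorem.
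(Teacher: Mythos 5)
Your proposal is correct and coincides with the paper's own argument: the paper proves Theorem~\ref{thm:disk-rigid-holo} precisely by noting that the proof of Theorem~\ref{thm:disks} used only the two properties in Definition~\ref{defn:disk-rigid}, which is the substitution you carry out step by step (Bernstein extension near the diagonal, the foliation by graphs of reflections, and the essential-properness/rigidity dichotomy). Your point of vigilance about a common ellipse for all slices is handled the same way as in Lemma~\ref{lem-real-analytic}, via the uniform upper and lower bounds on the lengths of the slices over the compact square.
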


 We also have the following generalisation, which implies Corollary~\ref{cor:intro} and
 Theorem~\ref{thm:disks:other}. The proof follows from Theorem ~\ref{thm:disk-rigid-holo} and Weyl's
 regularity lemma.

 \begin{thm}\label{thm:disk-convex}
   Let $\mathcal{B}_1$, $\mathcal{B}_2$ be two complete \textit{disk-rigid} domains for the
   Kobayashi metric. Every totally geodesic isometry
   $f: \mathcal{B}_1 \hookrightarrow \mathcal{B}_2$ is either holomorphic or anti-holomorphic.
 \end{thm}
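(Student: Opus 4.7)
The plan is to reduce Theorem~\ref{thm:disk-convex} to Theorem~\ref{thm:disk-rigid-holo} by restricting $f$ to complex geodesics in the source, and then to upgrade the resulting geodesic-wise information to global holomorphicity via Weyl's regularity lemma. By condition (1) of Definition~\ref{defn:disk-rigid}, through every point of $\mathcal{B}_1$ and in every real tangent direction there passes a complex geodesic $\gamma_{\C}:\mathbb{CH}^1\hookrightarrow\mathcal{B}_1$.

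First, fix such a complex geodesic $\gamma_{\C}$. The composition of two totally geodesic isometries is again a totally geodesic isometry, so $f\circ\gamma_{\C}:\mathbb{CH}^1\hookrightarrow\mathcal{B}_2$ is a totally geodesic isometry for the Kobayashi metric on $\mathcal{B}_2$. Since $\mathcal{B}_2$ is \textit{disk-rigid}, Theorem~\ref{thm:disk-rigid-holo} applies and $f\circ\gamma_{\C}$ is either holomorphic or anti-holomorphic. Consequently, the restriction of $f$ to the $1$-dimensional complex submanifold $\gamma_{\C}(\mathbb{CH}^1)\subset\mathcal{B}_1$ is either holomorphic or anti-holomorphic as a map into $\mathcal{B}_2$.

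Next, I would verify that the resulting sign (holomorphic versus anti-holomorphic) is globally consistent on $\mathcal{B}_1$. Fix $p\in\mathcal{B}_1$. As the tangent complex line to $\gamma_{\C}$ at $p$ varies over the space of complex lines in $T_p\mathcal{B}_1$, the tangent real $2$-plane to $\gamma_{\C}(\mathbb{CH}^1)$ rotates continuously. Since $df_p$ is an $\mathbb{R}$-linear isometry, the property of restricting to a complex-linear versus conjugate-linear map on such a $2$-plane is locally constant in the direction; connectedness of the space of complex lines then forces the sign to be constant at $p$, and continuity in $p$ together with connectedness of $\mathcal{B}_1$ make it globally constant. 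Without loss of generality, $f\circ\gamma_{\C}$ is holomorphic for every complex geodesic $\gamma_{\C}$ in $\mathcal{B}_1$; the other case is handled symmetrically.

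The final step is to pass from this ``holomorphicity along each complex geodesic'' to genuine global holomorphicity of $f$ on $\mathcal{B}_1$. If $f$ were a priori smooth, this would follow formally: $df_p$ would be $\mathbb{C}$-linear on every complex line of $T_p\mathcal{B}_1$, hence $\mathbb{C}$-linear on the whole tangent space, so $f$ would be holomorphic. The missing ingredient is regularity of $f$, which a priori is only a (Lipschitz) continuous isometry between complete Kobayashi metric spaces, and I expect this regularity to be the main obstacle. Weyl's regularity lemma intervenes here: holomorphicity of $f$ along the abundant family of complex geodesics through every point in every complex direction yields $\bar{\partial}f=0$ in the weak (distributional) sense, and the ellipticity of $\bar{\partial}$ then upgrades this to smoothness and classical holomorphicity of $f$, completing the proof.
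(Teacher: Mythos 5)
Your overall skeleton --- restrict $f$ to complex geodesics, apply Theorem~\ref{thm:disk-rigid-holo} along each one, then conclude with Weyl's regularity lemma --- is the same as the paper's. The genuine gap is in how you handle regularity and the global consistency of the sign. You fix an arbitrary $p\in\mathcal{B}_1$, argue with $df_p$, and then propagate the choice (holomorphic vs.\ anti-holomorphic) by ``continuity in $p$'' of $df_p$. But a priori $f$ is only an isometry for the Kobayashi distances: the paper first notes that the Kobayashi metric is locally bi-Lipschitz to a Hermitian metric, so $f$ is locally Lipschitz and, by Rademacher's theorem, differentiable only at \emph{almost every} point; there is no everywhere-existence, let alone continuity, of $df_p$ to appeal to, so your consistency argument does not go through as stated. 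The paper fixes the sign globally in a different way, and this is exactly where the completeness hypothesis (which your argument never really uses) enters: completeness gives a complex geodesic joining any two points, and applying Theorem~\ref{thm:disk-rigid-holo} to $f$ along that geodesic forces the signs at any two points of differentiability to agree, so $df_p$ is $\C$-linear for a.e.\ $p$ or conjugate-linear for a.e.\ $p$.

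Relatedly, your final assertion that holomorphicity of $f$ along the complex geodesics ``yields $\bar\partial f=0$ in the weak (distributional) sense'' is not automatic: holomorphicity along a family of curves does not by itself produce a distributional equation. The correct route --- the paper's --- goes through the Lipschitz regularity: for a locally Lipschitz map the distributional derivative coincides with the a.e.\ pointwise derivative, so once $df_p$ is $\C$-linear almost everywhere one gets $\bar\partial f=0$ as a distribution, and only then does Weyl's lemma (elliptic regularity) upgrade $f$ to a genuinely holomorphic map. Your pointwise argument at a point of differentiability (the two alternatives are closed and disjoint on the connected space of complex lines, and linearity on every complex line implies $\C$-linearity) is fine; what is missing is the bi-Lipschitz/Rademacher step, the use of completeness to make the choice of sign global, and the justification that the a.e.\ pointwise condition really gives the weak $\bar\partial$-equation.
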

\begin{proof}
  
  In a sufficiently small neighborhood of a point, the Kobayashi metric is bi-Lipschitz to a
  Hermitian metric.~\cite{Kobayashi:book:hyperbolic} It follows that a totally geodesic isometry
  $f: \mathcal{B}_1 \hookrightarrow \mathcal{B}_2$ is locally Lipschitz and hence it is
  differentiable at almost all points of $\mathcal{B}_1$, by Rademacher's theorem (see Theorem 3.1.6
  in~\cite{Federer:book:geometric}).

  Let $p\in \mathcal{B}_1$ such that the (real) linear map $df_p : T_p\mathcal{B}_1 \rightarrow T_p\mathcal{B}_2$ exists.
  Using Theorem~\ref{thm:disk-rigid-holo}, we conclude that $f$ sends complex geodesics in $\mathcal{B}_1$ through $p$ to
  complex geodesics in $\mathcal{B}_2$ through $f(p)$ and, in particular, the linear map $df_p$ sends complex lines in
  $T_p\mathcal{B}_1$ to complex lines in $T_p\mathcal{B}_2$. We conclude that $df_p$ is either a complex linear map or
  complex anti-linear map.
  
  The assumption that the Kobayashi metric of $\mathcal{B}_1$ and $\mathcal{B}_2$ is complete
  implies that there is a complex geodesic between any pair of distinct points in $\mathcal{B}_1$
  and $\mathcal{B}_2$. Hence, $df_p$ is either complex linear for almost every
  $p\in \mathcal{B}_1$ or complex anti-linear for almost every $p\in \mathcal{B}_1$. In
  particular, up to conjugation, $f$ is holomorphic as a distribution and the theorem follows from
  Weyl's regularity lemma.~\cite{Krantz:book}
\end{proof}





\end{document}